\newtheorem{theorem}{Theorem}[section]
\newtheorem{lemma}[theorem]{Lemma}
\newtheorem{proposition}{Proposition}[section]
\theoremstyle{definition}
\newtheorem{definition}[theorem]{Definition}
\newtheorem{example}[theorem]{Example}
\theoremstyle{remark}
\newtheorem{remark}[theorem]{Remark}
\numberwithin{equation}{section}
\begin{document}

\title{Conditional Probability Matrix and the $S^2$-rank}

\author{Mihai D. Staic}
\address{Department of Mathematics and Statistics, Bowling Green State University, Bowling Green, OH 43403 }
\address{Institute of Mathematics of the Romanian Academy, PO.BOX 1-764, RO-70700 Bu\-cha\-rest, Romania.}
\email{mstaic@bgsu.edu}




\subjclass[2020]{Primary 15A15}


\keywords{rank, linear dependence, conditional probability}

\begin{abstract} Using the $det^{S^2}$ map from  \cite{sta2}, we introduce the notion of $S^2$-rank of a matrix of type  $d\times \frac{s(s-1)}{2}$.  As an application, we show that the  conditional probability matrix associated to two random variables has the $S^2$-rank equal to $1$. Under suitable conditions we prove that the converse of this result also holds.  
\end{abstract}

\maketitle

\section{Introduction} 

The rank of a matrix is an important invariant for several problems in mathematics.   In probability theory the rank can be used to determine when two discrete random variables are independent. More precisely, if $X$ and $Y$ are discrete random variables, and $A=(a_{i,j})$ is the joint probabilities matrix determined by $a_{i,j}=P(X=i,Y=j)$ for all 
$1\leq i\leq n$  and $1\leq j\leq m$,  then $X$ and $Y$ are independent if and only if  the rank of the $n\times m$ matrix $A$  is equal to $1$ (see \cite{dss}). 

The map $det^{S^2}:V_2^6\to k$ (for a vector space $V_2$ of dimension $2$) was introduced in \cite{sta2} as a natural generalization of the determinant map. It was obtain from an exterior algebra-like construction inspired by work of Pirashvili \cite{p} and Voronov \cite{vo}.  Properties of this map and generalizations were studied in \cite{lss} and \cite{sv}. In particular, there exists a geometrical interpretation of the condition $det^{S^2}((v_{i,j})_{1\leq i<j\leq 4})=0$ that we will use in this paper (see Theorem \ref{th1}).

Using the $det^{S^2}$ map, we we introduce the notion of $S^2$-rank of a matrix of type  $d\times \frac{s(s-1)}{2}$. We investigate those matrices that have the $S^2$-rank equal to $1$, and give an application to probability theory. More precisely, if $X:D\to \{1,2,\dots, s\}$,  and $Y:D\to \{1,2,\dots, d\}$ are random variables, and  $B=(b_{i,j}^k)$ is the conditional probability matrix (i.e. $b_{i,j}^k=P(Y=k\vert i<X\leq j)$ for $1\leq i<j\leq s$ and $1\leq k\leq d$), then the $S^2$-rank of $B$ is $1$. We show that under some mild conditions the converse of this statement is also true.  We  give a few examples, and discuss possible applications to statistics. 

\section{Preliminaries}

\subsection{The $det^{S^2}$ map}
In this paper $k$ is a field with $char(k)=0$, $V_d=k^d$ is a $k$-vector space of dimension $d$, and $\mathcal{B}_d=\{e_1,e_2,\dots e_d\}$ is the standard basis in $V_d$. For applications we take $k=\mathbb{R}$. 

Consider $v_{i,j}\in V_d$ for all $1\leq i <j\leq s$, we denote by 
$$\mathfrak{V}=(v_{i,j})_{1\leq i<j\leq s}\in V_d^{\frac{s(s-1)}{2}},$$ 
the collection of $\frac{s(s-1)}{2}$ vectors $v_{i,j}$. Using the standard basis $\mathcal{B}_d$, we can identify $\mathfrak{V}$ with a $d\times \frac{s(s-1)}{2}$ matrix whose columns are  determined by the vectors $v_{i,j}=\sum_{k=1}^dv_{i,j}^ke_k$. The rows of this matrix are indexed by elements in the set $\{1,2,\dots, d\}$, while columns are indexed by pairs $(i,j)$ where $1\leq i<j\leq s$. 
For example, if $d=2$, $s=4$ and $v_{i,j}=\begin{bmatrix}
\alpha_{i,j}\\
\beta_{i,j}
\end{bmatrix}$, then we identify $\mathfrak{V}=(v_{i,j})_{1\leq i<j\leq 4}\in V_2^6$ with the $2\times 6$ matrix
$$\begin{bmatrix}
\alpha_{1,2} & \alpha_{2,3} & \alpha_{3,4} & \alpha_{1,3} & \alpha_{2,4} &\alpha_{1,4}\\
\beta_{1,2} & \beta_{2,3} & \beta_{3,4} &\beta_{1,3} & \beta_{2,4}&\beta_{1,4}\\
\end{bmatrix}.$$
Notice that the order of the columns has to be fixed (we cannot permute them). To keep track  of columns and operations that are allowed on this matrix see the tensor upper triangular notation and the results from  \cite{lss}, \cite{sta2} and \cite{sv}. In this paper we do not need that much detail, so we will use this simpler matrix notation.

We recall from \cite{sta2} the formula for the map $det^{S^2}:V_2^6\to k$. For $v_{i,j}=\begin{bmatrix}
\alpha_{i,j}\\
\beta_{i,j}
\end{bmatrix}\in k^2$ we have
\begin{eqnarray*} &det^{S^2}\begin{bmatrix}
\alpha_{1,2} & \alpha_{2,3} & \alpha_{3,4} & \alpha_{1,3} & \alpha_{2,4} &\alpha_{1,4}\\
\beta_{1,2} & \beta_{2,3} & \beta_{3,4} &\beta_{1,3} & \beta_{2,4}&\beta_{1,4}\\
\end{bmatrix}=&\\
&\alpha_{1,2}\alpha_{2,3}\alpha_{3,4}\beta_{1,3}\beta_{2,4}\beta_{1,4}+
\alpha_{1,2}\beta_{2,3}\alpha_{3,4}\beta_{1,3}\beta_{2,4}\alpha_{1,4}+
\alpha_{1,2}\beta_{2,3}\beta_{3,4}\alpha_{1,3}\alpha_{2,4}\beta_{1,4}&\\
&+\beta_{1,2}\beta_{2,3}\alpha_{3,4}\alpha_{1,3}\alpha_{2,4}\beta_{1,4}+
\beta_{1,2}\alpha_{2,3}\beta_{3,4}\beta_{1,3}\alpha_{2,4}\alpha_{1,4}+
\beta_{1,2}\alpha_{2,3}\beta_{3,4}\alpha_{1,3}\beta_{2,4}\alpha_{1,4}&\\
&-\beta_{1,2}\beta_{2,3}\beta_{3,4}\alpha_{1,3}\alpha_{2,4}\alpha_{1,4}-
\beta_{1,2}\alpha_{2,3}\beta_{3,4}\alpha_{1,3}\alpha_{2,4}\beta_{1,4}-
\beta_{1,2}\alpha_{2,3}\alpha_{3,4}\beta_{1,3}\beta_{2,4}\alpha_{1,4}&\\
&-\alpha_{1,2}\alpha_{2,3}\beta_{3,4}\beta_{1,3}\beta_{2,4}\alpha_{1,4}-
\alpha_{1,2}\beta_{2,3}\alpha_{3,4}\alpha_{1,3}\beta_{2,4}\beta_{1,4}-
\alpha_{1,2}\beta_{2,3}\alpha_{3,4}\beta_{1,3}\alpha_{2,4}\beta_{1,4}.&
\end{eqnarray*}
\begin{remark}
This formula was obtained from an exterior algebra-like construction. It was proved in \cite{sta2} that  the map $det^{S^2}$ is a the unique nontrivial multilinear map defined on $V_2^6$ which has the property that $det^{S^2}((v_{i,j})_{1\leq i<j\leq 4})=0$ if there exists $1\leq x<y<z\leq 4$ such that $v_{x,y}=v_{x,z}=v_{y,z}$.
\end{remark}
 Alternatively, one can show that 
\begin{eqnarray}
det^{S^2}((v_{i,j})_{1\leq i<j\leq 4})=det\begin{bmatrix}
\alpha_{1,2} & \alpha_{2,3} & 0 & -\alpha_{1,3} & 0 &0\\
\beta_{1,2} & \beta_{2,3} & 0 &-\beta_{1,3} & 0&0\\
\alpha_{1,2} & 0 & 0 & 0 & \alpha_{2,4} & -\alpha_{1,4}\\
\beta_{1,2} & 0 & 0 & 0 & \beta_{2,4} & -\beta_{1,4}\\
0 & 0 & \alpha_{3,4} & \alpha_{1,3} & 0 &-\alpha_{1,4}\\
0 & 0 & \beta_{3,4} & \beta_{1,3} & 0 &-\beta_{1,4}
\end{bmatrix},
\label{eqdS2}
\end{eqnarray}
where $det$ is the usual determinant map (see \cite{sv}).

\begin{theorem} (\cite{sv}) Take $V_2=\mathbb{R}^2$ and let $(v_{i,j})_{1\leq i<j\leq 4}\in V_2^6$. Then the following are equivalent
\begin{enumerate}
\item $det^{S^2}((v_{i,j})_{1\leq i<j\leq 4})=0$.\\
\item There exist four points $Q_1$, $Q_2$, $Q_3$, $Q_4$ in the plane $\mathbb{R}^2$ and $\lambda_{i,j}\in\mathbb{R}$ for  $1\leq i<j\leq 4$ not all zero such that $\lambda_{i,j}v_{i,j}=\overrightarrow{Q_iQ_j}$. 
\end{enumerate}
\label{th1}
\end{theorem}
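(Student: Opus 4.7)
The plan is to exploit the alternative determinantal formula (\ref{eqdS2}), which expresses $det^{S^2}((v_{i,j})_{1\leq i<j\leq 4})$ as $\det(M)$ for an explicit $6\times 6$ matrix $M$ whose entries are linear in the coordinates of the $v_{i,j}$. In this language (1) is simply the assertion that $M$ has a nontrivial kernel.

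Next I would unpack what such a kernel vector encodes geometrically. The six rows of $M$ split naturally into three blocks of two rows, each block encoding the $x$- and $y$-coordinates of a single vector identity in $\mathbb{R}^2$. Writing the coefficient vector as $u=(u_{1,2},u_{2,3},u_{3,4},u_{1,3},u_{2,4},u_{1,4})$ and setting $w_{i,j}=u_{i,j}v_{i,j}$, one reads off from (\ref{eqdS2}) that $Mu=0$ is equivalent to the three vector cocycle identities
\[
w_{1,2}+w_{2,3}=w_{1,3},\qquad w_{1,2}+w_{2,4}=w_{1,4},\qquad w_{1,3}+w_{3,4}=w_{1,4},
\]
corresponding to the three triangles through vertex $1$. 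The remaining triangle relation $w_{2,3}+w_{3,4}=w_{2,4}$ is then an automatic consequence of these three.

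Then I would interpret these identities as precisely the condition for the $w_{i,j}$ to be edge vectors of a four-point configuration in the plane. For (1)$\Rightarrow$(2), given a nontrivial kernel vector I set $Q_1=0$ and $Q_j=w_{1,j}$ for $j=2,3,4$; the cocycle identities immediately yield $\overrightarrow{Q_iQ_j}=Q_j-Q_i=w_{i,j}=u_{i,j}v_{i,j}$ for every pair $i<j$, so $\lambda_{i,j}:=u_{i,j}$ witnesses (2). For (2)$\Rightarrow$(1), scalars $\lambda_{i,j}$ not all zero with $\lambda_{i,j}v_{i,j}=\overrightarrow{Q_iQ_j}$ make the products $w_{i,j}=\lambda_{i,j}v_{i,j}$ satisfy every cocycle identity, so $(\lambda_{i,j})$ lies in the kernel of $M$ and forces $\det(M)=0$.

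The main technical step is the initial unpacking of $Mu=0$ into the three vector cocycle equations; once the block structure of the matrix in (\ref{eqdS2}) has been read off carefully, everything else is a translation between linear dependence and synthetic affine geometry. One small nuance is that (2) allows individual $\lambda_{i,j}$ to vanish, in which case the corresponding equality forces $Q_i=Q_j$, but this causes no trouble because the whole argument runs through the products $w_{i,j}=\lambda_{i,j}v_{i,j}$ rather than the $\lambda_{i,j}$ themselves.
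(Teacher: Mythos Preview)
The paper does not supply a proof of this theorem: it is quoted from \cite{sv} and used as a black box (it is invoked in the proof of Theorem~\ref{th2} and as the base case of Lemma~\ref{lemma1}). So there is no in-paper argument to compare against.

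That said, your argument is correct and is essentially the proof one would expect. The reading of the block structure of the matrix in~(\ref{eqdS2}) is accurate: with $u=(u_{1,2},u_{2,3},u_{3,4},u_{1,3},u_{2,4},u_{1,4})^T$ and $w_{i,j}=u_{i,j}v_{i,j}$, the six scalar equations $Mu=0$ are exactly the coordinate versions of
\[
w_{1,2}+w_{2,3}=w_{1,3},\qquad w_{1,2}+w_{2,4}=w_{1,4},\qquad w_{1,3}+w_{3,4}=w_{1,4},
\]
and the fourth triangle relation follows from these. Your construction $Q_1=0$, $Q_j=w_{1,j}$ then gives $\overrightarrow{Q_iQ_j}=w_{i,j}$ for all pairs, and the converse is immediate since $\overrightarrow{Q_iQ_j}$ automatically satisfies every cocycle identity. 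The ``not all zero'' clause is preserved in both directions because a nontrivial kernel vector has some $u_{i,j}\neq 0$, and conversely a nonzero tuple $(\lambda_{i,j})$ is a nonzero element of $\ker M$. Nothing is missing.
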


\begin{remark} For a vector space $V_d$ of dimension $d$, there exists a map $det^{S^2}:V_d^{d(2d-1)}\to k$  that is nontrivial, multilinear, and has the property that $det^{S^2}((v_{i,j})_{1\leq i<j\leq 2d})=0$ if there exist $1\leq x<y<z\leq 2d$ such that $v_{x,y}=v_{x,z}=v_{y,z}$ (see \cite{m4}). In this paper we will only use the case $d=2$, so we do not give details about the general case. For $d=2$, and $d=3$ it is know that a map with the above property is unique up to a constant (see \cite{lss} and \cite{sta2}). For $d>3$ uniqueness is still an open question. 
\end{remark}

\subsection{Independent variables}
To put the results from this paper in context, we recall from \cite{dss} a theorem about independent variables and the joint probability matrix. 

Let $P$ be a probability function on a space $D$, and take $X:D\to \{1,2,...,n\}$ and $Y:D\to \{1,2,...,m\}$ to be two discrete random variables. We say that $X$ and $Y$ are independent if $$P(X=i,Y=j)=P(X=i)P(Y=j),$$
for all $1\leq i\leq n$, and $1\leq j\leq m$ (see \cite{lm}). Define the joint probability matrix of $X$ and $Y$ as the $n\times m$ matrix with entries 
$$a_{i,j}=P(X=i, Y=j),$$
for all $1\leq i\leq n$, and all $1\leq j\leq m$. One has the following result (see \cite{dss}). 
\begin{proposition}
The two random variables $X$ and $Y$ are independent if and only if the joint probability matrix  $(a_{i,j})_{i,j}$ has rank $1$. 
\end{proposition}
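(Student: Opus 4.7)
The plan is to prove the two implications directly from the definitions of rank one and of independence, using the row/column sums of $A$ as the link between the algebraic factorization and the marginal probabilities.

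For the forward direction, I would assume $X$ and $Y$ independent and set $u_i = P(X=i)$ and $v_j = P(Y=j)$. Then $a_{i,j} = u_i v_j$, so $A = u v^{T}$ is an outer product of two nonzero vectors (nonzero because there is at least one $i$ with $P(X=i)>0$ and at least one $j$ with $P(Y=j)>0$, otherwise the random variables are not well-defined). Hence $A$ has rank exactly $1$.

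For the converse, suppose $A$ has rank $1$. Then there exist nonzero vectors $u \in k^n$ and $v \in k^m$ with $a_{i,j} = u_i v_j$. The key step is to show that this factorization, after the correct normalization, coincides with the marginal distributions. Summing over $j$ gives $P(X=i) = \sum_j a_{i,j} = u_i \bigl(\sum_j v_j\bigr)$, and similarly $P(Y=j) = v_j \bigl(\sum_i u_i\bigr)$. Writing $S = \sum_i u_i$ and $T = \sum_j v_j$, the total probability condition $\sum_{i,j} a_{i,j} = 1$ becomes $ST = 1$. Therefore
\begin{equation*}
P(X=i)\, P(Y=j) \;=\; (u_i T)(v_j S) \;=\; u_i v_j \cdot ST \;=\; u_i v_j \;=\; a_{i,j},
\end{equation*}
which is exactly the definition of independence.

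I do not expect a real obstacle here: the only subtlety is that the rank-one factorization $A = uv^{T}$ is defined only up to rescaling $(u,v) \mapsto (\lambda u, \lambda^{-1} v)$, and one must be careful to use the normalization $ST = 1$ (inherited from $\sum a_{i,j} = 1$) rather than trying to match $u$ and $v$ separately to the marginals. Once this is handled, the argument reduces to a one-line computation. This same bookkeeping will later be the template for the $S^2$-rank analogue, where the role of the marginal sums will be played by the coefficients $\lambda_{i,j}$ and the points $Q_i$ from Theorem~\ref{th1}.
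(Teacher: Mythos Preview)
Your argument is correct and is the standard proof of this fact. Note, however, that the paper does not actually prove this proposition: it is quoted from \cite{dss} as background material in the preliminaries, with no proof given. So there is nothing in the paper to compare your argument against; your write-up simply supplies the omitted details, and the normalization via $ST=1$ is exactly the right way to handle the scaling ambiguity in the rank-one factorization.
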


\section{Conditional Probability Matrix}
In this section we discuss  the notion of $S^2$-rank and show that the conditional probability matrix has the  $S^2$-rank equal to $1$.

\begin{definition} Let $\mathfrak{V}=(v_{i,j})_{1\leq i<j\leq s}\in V_d^{\frac{s(s-1)}{2}}$ be a nonzero element such that $\displaystyle{v_{i,j}=\sum_{k=1}^dv_{i,j}^ke_k}$. For every $1\leq x_1<x_2<x_3<x_4\leq s$, and every $1\leq a_1<a_2\leq q$ we define the $2$-minor $M_{x_1,x_2,x_3,x_4}^{a_1,a_2}(\mathfrak{V})$ as the element $(w_{i,j})_{1\leq i<j\leq 6}\in V_2^6$ determined by 
\begin{eqnarray}
w_{i,j}=\begin{bmatrix}
v_{x_i,x_j}^{a_1}\\
v_{x_i,x_j}^{a_2}
\end{bmatrix}.
\end{eqnarray}
 We say that $\mathfrak{V}$ has the $S^2$-rank equal to $1$ if  for every $1\leq x_1<x_2<x_3<x_4\leq s$, and every $1\leq a_1<a_2\leq d$  we have 
$det^{S^2}(M_{x_1,x_2,x_3,x_4}^{a_1,a_2}(\mathfrak{V}))=0$. 
\label{rankS2}
\end{definition}

\begin{remark} As mentioned above, for every $q$ there exists a map $det^{S^2}_{q}:V_q^{q(2q-1)}\to k$ that generalizes the determinant map. 
One can easily extend Definition \ref{rankS2} by saying that $\mathfrak{V}=(v_{i,j})_{1\leq i<j\leq s}\in V_d^{\frac{s(s-1)}{2}}$ has $S^2$-rank equal to $q-1$ if there exists a $(q-1)$-minor of $\mathfrak{V}$ such that 
$det^{S^2}_{q-1}(M_{y_1,\dots,y_{2q-2}}^{b_1,\dots,b_{q-1}}(\mathfrak{V}))\neq 0$, and for every $q$-minor  of $\mathfrak{V}$ we have $det^{S^2}_{q}(M_{x_1,\dots,x_{2q}}^{a_1,\dots,a_{q}}(\mathfrak{V}))=0$. Since we are only interested in the case $q=2$ we will not elaborate on the general definition. 
\end{remark}

\begin{definition}
Let $P$ be a probability function on $D$. Take $X:D\to \{1,2,\dots,s\}$, and $Y:D\to \{1,2,\dots,d\}$ two discrete random variable, such that $X(\delta)>1$ for all $\delta\in D$, and $P(i<X\leq j)>0$ for all $1\leq i<j\leq s$. 
\begin{enumerate}
\item The conditional probability matrix is the $d\times \frac{s(s-1)}{2}$ matrix determined by 
$$v_{i,j}^a=P(Y=a ~\vert ~ i<X\leq j)=\frac{P(Y=a,~i<X\leq j)}{P(i<X\leq j)},$$ 
for all $1\leq a\leq d$, and $1\leq i<j\leq s$ .
\item The distribution vectors $p_i\in V_d$ for $1\leq i\leq s$, are determined by $$p_i^a=P(Y=a,X\leq i),$$ 
for all $1\leq a\leq d$ and $1\leq i\leq s$. 
\item The  distribution weights $\lambda_{i,j}\in [0,1]$ are defined by   $$\lambda_{i,j}=P(i<X\leq j),$$ for all $1\leq i<j\leq s$. 
\end{enumerate}
\end{definition}

Notice that the conditional probability matrix  can be identified with  $\mathfrak{V}_{X,Y}=(v_{i,j})_{1\leq i<j\leq s}\in V_d^{\frac{s(s-1)}{2}}$, where $v_{i,j}=\sum_{a=1}^dv_{i,j}^ae_a\in V_d$. Also, because $X(\delta)>1$ for all $\delta\in D$ we have that $p_1=0\in V_d$.

\begin{theorem} Let $X:D\to \{1,2,\dots,s\}$, and $Y:D\to \{1,2,\dots,d\}$  be  discrete random variable such that $X(\delta)>1$ for all $\delta\in D$, and $P(i<X\leq j)>0$ for all $1\leq i<j\leq s$. Consider the conditional probability matrix $\mathfrak{V}_{X,Y}=(v_{i,j})_{1\leq i<j\leq s}\in V_d^{\frac{s(s-1)}{2}}$,  the distribution vectors $p_i\in V_d$ for all $1\leq i\leq s$, and the distribution weights $\lambda_{i,j}$ for all $1\leq i<j\leq s$ defined as above. Then we have 
\begin{enumerate}
\item For all $1\leq i<j\leq s$, and  $1\leq a\leq d$ we have $\lambda_{i,j}\in (0,1]$, $v_{i,j}^a\in [0,1]$, ${\displaystyle \sum_{b=1}^{d}v_{i,j}^b=1}$, and $$\lambda_{i,j}v_{i,j}=(p_j-p_i).$$ 
\item For all $1\leq i< j< k\leq s$ there exists $\alpha_{i,j,k}\in (0,1)$ such that $$v_{i,k}=\alpha_{i,j,k}v_{i,j}+(1-\alpha_{i,j,k})v_{j,k},$$ in particular  $rank[v_{i,j},v_{i,k},v_{j,k}]\leq 2$.
\item The $S^2$-rank of the conditional probability matrix $\mathfrak{V}_{X,Y}=(v_{i,j})_{1\leq i<j\leq s}\in V_d^{\frac{s(s-1)}{2}}$ is $1$. 
\end{enumerate}
\label{th2}
\end{theorem}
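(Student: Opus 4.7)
The plan is to prove the three parts in order, using part (1) as the algebraic backbone and Theorem \ref{th1} as the bridge from the vector identity in (1) to the vanishing of $det^{S^2}$ in (3).

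For part (1), I would read off each bound directly from the definitions. The inequality $\lambda_{i,j}\in(0,1]$ is immediate from the hypothesis $P(i<X\leq j)>0$, each $v_{i,j}^a$ is a conditional probability so lies in $[0,1]$, and $\sum_b v_{i,j}^b$ is the total mass of $Y$ given the event $i<X\leq j$, hence equals $1$. The key identity $\lambda_{i,j}v_{i,j}=p_j-p_i$ follows from the definition of conditional probability and the disjoint decomposition $\{Y=a,\,X\leq j\}=\{Y=a,\,X\leq i\}\sqcup\{Y=a,\,i<X\leq j\}$, valid since $i<j$.

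For part (2), I would combine the identity from (1) with additivity. From $\lambda_{i,k}v_{i,k}=p_k-p_i=(p_j-p_i)+(p_k-p_j)=\lambda_{i,j}v_{i,j}+\lambda_{j,k}v_{j,k}$, and from the disjoint decomposition $\{i<X\leq k\}=\{i<X\leq j\}\sqcup\{j<X\leq k\}$ giving $\lambda_{i,k}=\lambda_{i,j}+\lambda_{j,k}$, the convex combination follows with $\alpha_{i,j,k}=\lambda_{i,j}/\lambda_{i,k}$, which lies strictly between $0$ and $1$ because both $\lambda_{i,j}$ and $\lambda_{j,k}$ are strictly positive by hypothesis. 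The rank bound then follows since $v_{i,k}$ is a linear combination of $v_{i,j}$ and $v_{j,k}$.

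For part (3), which I expect to be the conceptual heart of the theorem, the strategy is to invoke Theorem \ref{th1}. Fix indices $1\leq x_1<x_2<x_3<x_4\leq s$ and coordinates $1\leq a_1<a_2\leq d$, and define four points $Q_t\in\mathbb{R}^2$ for $t=1,2,3,4$ by projecting the distribution vectors onto the chosen coordinates, namely $Q_t=\bigl(p_{x_t}^{a_1},\,p_{x_t}^{a_2}\bigr)^{T}$. The vectors $w_{i,j}$ of the minor $M_{x_1,x_2,x_3,x_4}^{a_1,a_2}(\mathfrak{V}_{X,Y})$ are precisely the $(a_1,a_2)$-projections of $v_{x_i,x_j}$, so from part (1) we immediately get $\lambda_{x_i,x_j}w_{i,j}=\overrightarrow{Q_iQ_j}$. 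Setting $\lambda'_{i,j}=\lambda_{x_i,x_j}$, these scalars are strictly positive, hence in particular not all zero, so condition (2) of Theorem \ref{th1} holds and therefore $det^{S^2}(M_{x_1,x_2,x_3,x_4}^{a_1,a_2}(\mathfrak{V}_{X,Y}))=0$. Since the indices were arbitrary, the $S^2$-rank equals $1$.

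The only subtle point in the argument will be making sure one applies Theorem \ref{th1} to the correct six scalars $\lambda'_{i,j}$ indexed by $1\leq i<j\leq 4$ (rather than by the original labels $x_i$), and noting that the hypothesis $P(i<X\leq j)>0$ for all pairs is exactly what guarantees nonzero scaling throughout; once this bookkeeping is in place, the result reduces to parts (1) and (2) combined with the geometric criterion.
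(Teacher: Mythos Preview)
Your proposal is correct and follows essentially the same route as the paper: part (1) is read off from the definitions and the disjoint decomposition, part (2) sets $\alpha_{i,j,k}=\lambda_{i,j}/\lambda_{i,k}$ using $\lambda_{i,k}=\lambda_{i,j}+\lambda_{j,k}$, and part (3) is deduced from Theorem~\ref{th1} via the identity $\lambda_{i,j}v_{i,j}=p_j-p_i$. Your part (3) is in fact more explicit than the paper's, which simply cites Theorem~\ref{th1} (and notes the alternative route via equations~\ref{eqdS2} and~\ref{eql1}) without spelling out the projection to the $Q_t$'s.
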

\begin{proof} The first statement follows directly from the definitions of $v_{i,j}$, $\lambda_{i,j}$ and $p_i$, and the fact that $P(Y=a,i<X\leq k)=P(Y=a,i<X\leq j)+P(Y=a,j<X\leq k)$. 

For the second statement notice that  if $1\leq i<j<k\leq s$ then
\begin{eqnarray}\label{eql1}
\lambda_{i,j}v_{i,j}-\lambda_{i,k}v_{i,k}+\lambda_{j,k}v_{j,k}=0, 
\end{eqnarray} so we can take $\alpha_{i,j,k}=\frac{\lambda_{i,j}}{\lambda_{i,k}}$, and use the fact that $\lambda_{i,k}=\lambda_{i,j}+\lambda_{j,k}$. The last statement is a consequence of Theorem \ref{th1} (it also follows directly from  equations \ref{eqdS2} and \ref{eql1}). 
\end{proof}

\begin{example} \label{example1} Suppose that we want to analyze if students in a certain class watched the Super Bowl halftime show.  
For this purpose we ask two of the students in that class to calculate  conditional probabilities $P(Y=1|i<X\leq j)$ for the variables $X$ and $Y$ defined below.  We take $D$ to be the set of all students in the class, and $Y:D\to \{1,2\}$,  
$Y=1$ if the student watched the show, and $2$ otherwise.  Finally, $X:D\to \{1,2,3,4\}$ with the convention that $X=2$ if the student is a freshman or sophomore,  $X=3$ if the student is a junior or senior, and $X=4$ if the student is a graduate student. 
The two students submit the tables from Figure \ref{studentA} and  Figure \ref{studentB}.   
\begin{figure}[htbp]
 \begin{tabular}{|c|c|c|c|c|c|c|c|c|}
  \hline
 $(i,j)$&$(1,2)$&$(2,3)$&$(3,4)$&$(1,3)$&$(2,4)$&$(1,4)$\\
  \hline
 $P(Y=1|i<X\leq j)$ &0.5&0.8&0.2&0.7&0.7&0.6\\
    \hline
    \end{tabular}
		\caption{Table from Student A \label{studentA}}
\end{figure}

\begin{figure}[htbp]
 \begin{tabular}{|c|c|c|c|c|c|c|c|c|}
  \hline
$(i,j)$&$(1,2)$&$(2,3)$&$(3,4)$&$(1,3)$&$(2,4)$&$(1,4)$\\
  \hline
 $P(Y=1|i<X\leq j)$ &0.5&0.75&0.25&0.7&0.65&0.625\\
    \hline
    \end{tabular}
		\caption{Table from Student B \label{studentB}}
\end{figure}

Assuming that one of them is right, we want to decide which one has done the correct computation, and what is the minimum number of students in that class.  

We consider the  conditional probability matrix corresponding  to tables submitted by students A and B, respectively 
$$\mathfrak{V}_A=\begin{bmatrix}
0.5 & 0.8 & 0.2& 0.7 & 0.7&0.6\\
0.5 & 0.2& 0.8 &0.3 & 0.3&0.4\\
\end{bmatrix},$$ 
and
$$\mathfrak{V}_B=\begin{bmatrix}
0.5 & 0.75 & 0.25& 0.7 & 0.65&0.625\\
0.5 & 0.25& 0.75 &0.3 & 0.35&0.375\\
\end{bmatrix}.$$  One can compute $det^{S^2}(\mathfrak{V}_A)=-0.007\neq 0$, and $det^{S^2}(\mathfrak{V}_B)=0$. From Theorem \ref{th2} we know that the table submitted by student A is wrong.

Next, consider the system associated to $\mathfrak{V}_B$
\begin{eqnarray*}
 \begin{cases}
\lambda_{1,2}v_{1,2}-\lambda_{1,3}v_{1,3}+\lambda_{2,3}v_{2,3}=0\\
\lambda_{1,2}v_{1,2}-\lambda_{1,4}v_{1,4}+\lambda_{2,4}v_{2,4}=0\\
\lambda_{1,3}v_{1,3}-\lambda_{1,4}v_{1,4}+\lambda_{3,4}v_{3,4}=0,
 \end{cases}
\end{eqnarray*}
or equivalently 
$$
\begin{bmatrix}
0.5 & 0.75 & 0 & -0.7 & 0 &0\\
0.5 & 0.25 & 0 &-0.3 & 0&0\\
0.5 & 0 & 0 & 0 & 0.65 & -0.625\\
0.5 & 0 & 0 & 0 & 0.35 & -0.375\\
0 & 0 & 0.25 & 0.7 & 0 &-0.625\\
0 & 0 & 0.75& 0.3 & 0 &-0.375
\end{bmatrix} \begin{bmatrix}
\lambda_{1,2}\\
\lambda_{2,3}\\
\lambda_{3,4}\\
\lambda_{1,3}\\
\lambda_{2,4}\\
\lambda_{1,4}
\end{bmatrix}=
\begin{bmatrix}
0\\
0\\
0\\
0\\
0\\
0
\end{bmatrix}.
$$
After we solve it, we get the general solution
\begin{eqnarray}
\begin{bmatrix}
\lambda_{1,2}\\
\lambda_{2,3}\\
\lambda_{3,4}\\
\lambda_{1,3}\\
\lambda_{2,4}\\
\lambda_{1,4}
\end{bmatrix}=\lambda
\begin{bmatrix}
1\\
4\\
1\\
5\\
5\\
6
\end{bmatrix}.\label{eqlam}
\end{eqnarray}
The smallest  solution for which $\lambda_{i,j}v_{i,j}\in \mathbb{Z}_+^2$ for all $1\leq i<j\leq 4$ is when $\lambda=4$, so the minimum class size is $24$.  In that case, the distribution is described in the table from  Figure \ref{studentB2}.  
\begin{figure}[htbp]
 \begin{tabular}{|c|c|c|c|c|c|c|c|c|}
  \hline
  &  Fr. or Soph.   &Jr. or Snr.& Grad. & UG & Jr, Snr. or Gr. &All\\
  \hline
Watched the show &2&12&1&14&13&15\\
    \hline
Did not watch the show &2&4&3&6&7&9\\
    \hline
    \end{tabular}
		\caption{Distribution table for $\mathfrak{V}_B$\label{studentB2}}
\end{figure}
\end{example}

\section{Main Result}

In this section we prove that under suitable conditions the converse of Theorem \ref{th2} holds.  First, we recall a trivial linear algebra result that will be used several times in this section. 
\begin{remark} Let $v_1,v_2,v_3\in \mathbb{R}^d$ such that  $rank[v_1,v_2,v_3]=2$. Then  the vector equation 
\begin{eqnarray}x_1v_1+
x_2v_2+x_3v_3=0,\label{eq2}
\end{eqnarray}  
has a nontrivial solution that is unique up to multiplication with a constant.
In particular, if $(\lambda_1,\lambda_2,\lambda_3)$ and $(\mu_1,\mu_2,\mu_3)$ are solutions for the equation \ref{eq2} such that $\lambda_1=\mu_1\neq 0$, then $\lambda_2=\mu_2$, and $\lambda_3=\mu_3$. 
\label{remark1}
\end{remark}

We have the following result that generalizes Theorem \ref{th1}. 
\begin{lemma} Let $v_{i,j}=\begin{bmatrix}
\alpha_{i,j}^1\\
\alpha_{i,j}^2\\
\vdots\\
\alpha_{i,j}^d
\end{bmatrix}\in \mathbb{R}^d$ for all $1\leq i<j\leq s$. 
Suppose that 
\begin{enumerate}
\item For every $1\leq i<j<k\leq s$, there exist $a_{i,j,k}> 0$, $b_{i,j,k}> 0$, and $c_{i,j,k}> 0$ such that 
\begin{eqnarray}
a_{i,j,k}v_{i,j}-b_{i,j,k}v_{i,k}+c_{i,j,k}v_{j,k}=0.
\end{eqnarray}
\item For every $1\leq i<j<k<l\leq s$, and all $1\leq a\leq d$ there exists $b$ (that depends on $a$, $i$, $j$, $k$ and $l$) such that 
\begin{eqnarray}rank\begin{bmatrix}
\alpha_{i_1,i_2}^a&\alpha_{i_1,i_3}^a&\alpha_{i_2,i_3}^a\\
\alpha_{i_1,i_2}^b&\alpha_{i_1,i_3}^b&\alpha_{i_2,i_3}^b
\end{bmatrix}=2,
\end{eqnarray}
for all $i_1<i_2<i_3\in \{i,j,k,l\}$. 
\item The $S^2$-rank of $(v_{i,j})_{1\leq i<j\leq s}\in V_d^{\frac{s(s-1)}{2}}$ is $1$.
\end{enumerate}
Then there exist $p_i\in \mathbb{R}^d$ for $1\leq i\leq s$, and $\lambda_{i,j}>0$ for $1\leq i<j\leq s$ such that 
\begin{eqnarray}\lambda_{i,j}v_{i,j}=p_j-p_i,
\end{eqnarray}
for all  $1\leq i<j\leq s$. 
\label{lemma1}
\end{lemma}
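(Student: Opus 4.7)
My plan is to build $p_i$ and positive $\lambda_{i,j}$ in two stages: first anchor on a base triangle $(1,2,\ast)$, then propagate to all remaining pairs using the $S^2$-rank hypothesis on quadruples. I begin with a preliminary rank computation. From conditions (1) and (2) I deduce that for every triple $1\leq i<j<k\leq s$, the $d\times 3$ matrix $[v_{i,j},v_{i,k},v_{j,k}]$ has rank exactly $2$: condition (1) supplies a nontrivial positive linear dependence (so the rank is $\leq 2$), while condition (2) exhibits two rows whose $2\times 3$ restriction has rank $2$ (so the rank is $\geq 2$). By Remark \ref{remark1}, the equation $x\,v_{i,j}-y\,v_{i,k}+z\,v_{j,k}=0$ then has a $1$-dimensional solution line, and by condition (1) every nontrivial solution has the same sign pattern.

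\textbf{Anchor.} Fix $\lambda_{1,2}>0$ and, for each $j\geq 3$, use the positive triangle relation on $(1,2,j)$ to pick the unique positive weights $\lambda_{1,j},\lambda_{2,j}$ with $\lambda_{1,2}v_{1,2}-\lambda_{1,j}v_{1,j}+\lambda_{2,j}v_{2,j}=0$. Set $p_1=0$, $p_2=\lambda_{1,2}v_{1,2}$, and $p_j=\lambda_{1,j}v_{1,j}$ for $j\geq 3$. Then $\lambda_{i,j}v_{i,j}=p_j-p_i$ is immediate for $i=1$ and follows by rearranging the $(1,2,j)$ relation for $i=2$.

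\textbf{Propagation and main step.} For $3\leq i<j$ I invoke the $S^2$-rank hypothesis on the quadruple $\{1,2,i,j\}$. Condition (2) supplies coordinates $(a_1,a_2)$ for which every triangle inside the quadruple projects to a rank-$2$ configuration in $\mathbb{R}^2$. Since $det^{S^2}$ vanishes on this projection, Theorem \ref{th1} furnishes points $R_1,R_2,R_i,R_j\in\mathbb{R}^2$ and scalars $\mu_{r,s}$, not all zero, with $\mu_{r,s}\,v_{r,s}\vert_{(a_1,a_2)}=\overrightarrow{R_rR_s}$. A short argument (if $\mu_{1,2}=0$, the rank-$2$ projections of the triangles through $1,2$ force every $\mu$ to vanish) gives $\mu_{1,2}\neq 0$, so after rescaling $\mu_{1,2}=\lambda_{1,2}$, Remark \ref{remark1} applied to the rank-$2$ triangles $(1,2,i)$ and $(1,2,j)$ in the projection forces $\mu_{1,i}=\lambda_{1,i}$, $\mu_{2,i}=\lambda_{2,i}$, $\mu_{1,j}=\lambda_{1,j}$, $\mu_{2,j}=\lambda_{2,j}$. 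The 2D triangle $(1,i,j)$ relation among the $\mu$'s then reads $\lambda_{1,i}v_{1,i}\vert_{(a_1,a_2)}-\lambda_{1,j}v_{1,j}\vert_{(a_1,a_2)}+\mu_{i,j}v_{i,j}\vert_{(a_1,a_2)}=0$. On the other hand condition (1) supplies a positive relation $a\,v_{1,i}-b\,v_{1,j}+c\,v_{i,j}=0$ in $\mathbb{R}^d$, whose 2D projection lies in the same $1$-dimensional kernel; matching the $v_{1,i}$-coefficient and using that the triangle has the same rank $2$ in $\mathbb{R}^d$ as in the projection lifts the proportionality back to $\mathbb{R}^d$, yielding $\lambda_{1,i}v_{1,i}-\lambda_{1,j}v_{1,j}+\mu_{i,j}v_{i,j}=0$. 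Hence $p_j-p_i=\mu_{i,j}v_{i,j}$, and $\lambda_{i,j}:=\mu_{i,j}>0$ completes the construction.

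The core obstacle is precisely this last compatibility: the weights $\lambda_{1,i}$ and $\lambda_{1,j}$ were computed independently through two different triangles, yet they are forced to align correctly with a third triangle $(1,i,j)$. The vanishing of $det^{S^2}$ on the quadruple $\{1,2,i,j\}$ is exactly what encodes this alignment, and Theorem \ref{th1} together with condition (2) is the bridge that transfers the planar compatibility back to $\mathbb{R}^d$.
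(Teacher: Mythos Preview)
Your argument is correct, and it takes a genuinely different route from the paper's. The paper proceeds by a double induction: first it establishes the case $s=4$, $d=2$ from Theorem~\ref{th1}, then inducts on $d$ (with $s=4$ fixed) by gluing one new coordinate at a time via a $2$-dimensional projection, and finally inducts on $s$ by applying the $s=4$ black box to each quadruple $\{1,2,k,s\}$. You avoid both inductions. Your anchor builds all the points $p_i$ at once from the fan of triangles $(1,2,j)$, and your propagation handles every remaining pair $(i,j)$ in a single shot: project the quadruple $\{1,2,i,j\}$ onto one well-chosen pair of coordinates (supplied by condition~(2)), use Theorem~\ref{th1} there to match the coefficients $\mu_{1,i}=\lambda_{1,i}$, $\mu_{1,j}=\lambda_{1,j}$, then lift the relation $\lambda_{1,i}v_{1,i}-\lambda_{1,j}v_{1,j}+\mu_{i,j}v_{i,j}=0$ from $\mathbb{R}^2$ back to $\mathbb{R}^d$ via the proportionality with the positive dependence $(a,b,c)$ from condition~(1). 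This last lifting step is the crux that lets you bypass the paper's induction on $d$: rather than gluing coordinates one by one, you use the $2$D projection only to pin down scalar coefficients, after which the full $d$-dimensional relation follows automatically from the given positive triangle relation. The paper's approach is more modular (it isolates a self-contained $s=4$ case in all dimensions), while yours is shorter and more direct.
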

\begin{proof} 
We will prove this result by induction.  When $s=4$ and  $d=2$ this follows from Theorem \ref{th1}. Indeed, since $d=2$, and the $S^2$-rank of $(v_{i,j})_{1\leq i<j\leq 4}$ is $1$ (i.e. $v_{i,j}\in \mathbb{R}^2$ and $det^{S^2}((v_{i,j})_{1\leq i<j\leq 4})=0$), we know from Theorem \ref{th1} that there exist $p_1$, $p_2$, $p_3$, $p_4\in \mathbb{R}^2$, and $\lambda_{i,j}\in \mathbb{R}$ not all zero, such that $$\lambda_{i,j}v_{i,j}=p_j-p_i,$$ for all  $1\leq i<j\leq 4$. We still need to show that $\lambda_{i,j}> 0$ for all $1\leq i<j\leq 4$. 

Since not all $\lambda_{i,j}$ are zero, let's assume that $\lambda_{1,2}\neq 0$ (the other cases are similar). If necessary, after multiplying all $\lambda_{i,j}$ with $-1$,  we may assume that $\lambda_{1,2}> 0$. Notice that
$(\lambda_{1,2},\lambda_{1,3}, \lambda_{2,3})$ and $(a_{1,2,3},b_{1,2,3},c_{1,2,3})$ are nontrivial solutions for the equation 
$$x_{1,2}v_{1,2}-x_{1,3}v_{1,3}+x_{2,3}v_{2,3}=0.$$ 
Since $rank[v_{1,2},v_{1,3},v_{2,3}]=2$, it follows from Remark \ref{remark1} that $(\lambda_{1,2},\lambda_{1,3}, \lambda_{2,3})=c(a_{1,2,3},b_{1,2,3},c_{1,2,3})$ for some constant $0\neq c\in \mathbb{R}$. Finally, because $a_{1,2,3}>0$, $b_{1,2,3}>0$, $c_{1,2,3}>0$, and $\lambda_{1,2}>0$, it follows that $c>0$, and so $\lambda_{1,3}>0 $ and $\lambda_{2,3}> 0$. 

This argument can be extended to show that $\lambda_{i,j}> 0$ for all $1\leq i<j\leq 4$. More precisely, 
using the fact that $\lambda_{1,2}> 0$ and  the linear dependence relation among $v_{1,2}$, $v_{1,4}$ and $v_{2,4}$,  one gets that $\lambda_{1,4}> 0$ and $\lambda_{2,4}> 0$. Then, using the fact that $\lambda_{2,3}> 0$, and the linear dependence relation among $v_{2,3}$, $v_{2,4}$ and $v_{3,4}$, one gets that $\lambda_{3,4}> 0$, which proves our statement for the case $s=4$ and $d=2$.

First we will take $s=4$ and do induction over $d$. Notice that from the case $d=2$ we know that for every $1\leq a\leq d$ there exists $1\leq b\leq d$,  $\lambda_{i,j}^{a,b}> 0$, and $p_i^{a,b}\in  \mathbb{R}^2$ such that 
$$\lambda_{i,j}^{a,b}\begin{bmatrix}
\alpha_{i,j}^a\\
\alpha_{i,j}^b
\end{bmatrix}=p_j^{a,b}-p_{i}^{a,b},$$
for all $1\leq i<j\leq 4$. 
We need to show that $\lambda_{i,j}^{a,b}$ does not depend of $(a,b)$, and that we can glue together the vectors $p_i^{a,b}$ to get the statement we want.

 After permuting the elements of $\{1,2,\dots, d\}$ we may assume that we solved the problem for the set $\{1,2,\dots,d-1\}$. More precisely, if  $w_{i,j}=\begin{bmatrix}
\alpha_{i,j}^1\\
\alpha_{i,j}^2\\
\vdots\\
\alpha_{i,j}^{d-1}
\end{bmatrix}\in \mathbb{R}^{d-1}$ there exist $\lambda_{i,j}> 0$, and $p_i=\begin{bmatrix}
p_i^1\\
p_i^2\\
\vdots\\
p_i^{d-1}
\end{bmatrix}\in \mathbb{R}^{d-1}$ such that 
$\lambda_{i,j}v_{i,j}=p_j-p_i,$
for all $1\leq i<j\leq 4$. Notice that for all $1\leq i<j<k\leq 4$ we have that $(\lambda_{i,j}, \lambda_{i,k},\lambda_{j,k})$, and $(a_{i,j,k}, b_{i,j,k}, c_{i,j,k})$ are nontrivial solutions of the vector equation
$$x_{i,j}w_{i,j}-x_{i,k}w_{i,k}+x_{j,k}w_{j,k}=0.$$ 
Since the $rank([w_{i,j},w_{i,k},w_{j,k}])=2$, from Remark \ref{remark1} we know that  $(\lambda_{i,j}, \lambda_{i,k},\lambda_{j,k})$ is a multiple of  $(a_{i,j,k}, b_{i,j,k}, c_{i,j,k})$.

Let $a=d$, then there exist $b\in \{1,2,\dots,d-1\}$ such that if $u_{i,j}=\begin{bmatrix}
v_{i,j}^a\\
v_{i,j}^b\\
\end{bmatrix}$ then $rank([u_{i,j},u_{i,k},u_{j,k}])=2$ for all $1\leq i<j<k\leq 4$. From the case $d=2$ we know that there exist $\mu_{i,j}> 0$ for all $1\leq i<j\leq 4$, and $q_{i}=\begin{bmatrix}
q_i^a\\
q_i^b\\
\end{bmatrix}\in \mathbb{R}^2$  for all $1\leq i\leq 4$, such that 
$$\mu_{i,j}u_{i,j}=q_j-q_i$$ for all $1\leq i<j\leq 4$. 

For all $1\leq i<j\leq 4$ we have that $(\mu_{i,j},\mu_{i,k},\mu_{j,k})$ and $(a_{i,j,k}, b_{i,j,k}, c_{i,j,k})$ are nontrivial solutions of the vector equation
$$\mu_{i,j}u_{i,j}-\mu_{i,k}u_{i,k}+\mu_{j,k}u_{j,k}=0.$$
Since the $rank([u_{i,j},u_{i,k},u_{j,k}])=2$ from Remark \ref{remark1} we have that  $(\mu_{i,j}, \mu_{i,k},\mu_{j,k})$ is a nonzero multiple of  $(a_{i,j,k}, b_{i,j,k}, c_{i,j,k})$. 
 
After rescaling $(\mu_{i,j})_{1\leq i<j\leq 4}$ we may assume that $\mu_{1,2}=\lambda_{1,2}>0$. Since $(\mu_{1,2}, \mu_{1,3},\mu_{2,3})$ and  $(\lambda_{1,2}, \lambda_{1,3},\lambda_{2,3})$ are both nonzero multiple of $(a_{i,j,k}, b_{i,j,k}, c_{i,j,k})$,  we get that $\mu_{1,3}=\lambda_{1,3}$ and $\mu_{2,3}=\lambda_{2,3}$. Similarly, using the linear dependence relation among $w_{1,2}$, $w_{1,4}$ and $w_{2,4}$, we get that  $\mu_{1,4}=\lambda_{1,4}$ and $\mu_{2,4}=\lambda_{2,4}$. Finally, using the linear dependence relation among $w_{2,3}$, $w_{2,4}$ and $w_{3,4}$ (and the fact that now we know  $\mu_{2,3}=\lambda_{2,3}$), we get that  $\mu_{3,4}=\lambda_{3,4}$.
This shows that,  if we take 
 $\widetilde{p_i}=\begin{bmatrix}
p_i^1\\
p_i^2\\
\vdots\\
p_i^{d-1}\\
q_i^{d}
\end{bmatrix}\in \mathbb{R}^d$
 we have 
$$\lambda_{i,j}v_{i,j}=\widetilde{p_j}-\widetilde{p_i},$$
for all $1\leq i<j\leq 4$. And so, by induction, we proved our statement when $s=4$. 

Next we will do induction over $s$. We just checked the case $s=4$, so we take $s>4$.  Assume that for all $1\leq i< j\leq s-1$ there exist $\lambda_{i,j}>0$, and for all 
$1\leq i\leq s-1$ there exist $p_i\in \mathbb{R}^d$ such that 
\begin{eqnarray}
\lambda_{i,j}v_{i,j}=p_j-p_i,
\end{eqnarray} 
for all $1\leq i<j\leq s-1$. We need to construct $\lambda_{i,s}>0$ for all $1\leq i\leq s-1$ and $p_{s}\in \mathbb{R}^d$. 

For any $3\leq k\leq s-1$, from the case $s=4$ applied to the set $\{1,2,k,s\}\subseteq \{1,2,\dots, s\}$ we know that there exist $\mu_{i,j}^{(k)}>0$ for all $i<j\in  \{1,2,k,s\}$ and $q_i^{(k)}\in \mathbb{R}^d$ for all $i\in \{1,2,k,s\}$ such that 
$$\mu_{i,j}^{(k)}v_{i,j}=q_j^{(k)}-q_i^{(k)},$$
for all $i<j\in \{1,2,k,s\}$. By replacing $q_i^{(k)}$ with $q_i^{(k)}+p_1-q_1^{(k)}$, we may assume that $q_1^{(k)}=p_1$.

After rescaling $\mu_{i,j}^{(k)}$ we may assume that $\mu_{1,2}^{(k)}=\lambda_{1,2}$. Notice that $(\lambda_{1,2},\lambda_{1,k}, \lambda_{2,k})$, and $(\mu_{1,2}^{(k)},\mu_{1,k}^{(k)},\mu_{2,k}^{(k)})$ are nontrivial solutions for the vector equation 
$$x_{1,2}v_{1,2}-x_{1,k}v_{1,k}+x_{2,k}v_{2,k}.$$ 
Since $rank[v_{1,2},v_{1,k},v_{2,k}]=2$, it follows from Remark \ref{remark1} that $\lambda_{1,k}=\mu_{1,k}^{(k)}$ and $\lambda_{2,k}=\mu_{2,k}^{(k)}$ for all $3\leq k\leq s-1$. 
In particular, we have $$\lambda_{1,2}v_{1,2}=p_2-p_1=q_2^{(k)}-p_1,$$ 
and 
$$\lambda_{1,k}v_{1,k}=p_k-p_1=q_k^{(k)}-p_1,$$
 which implies that $q_2^{(k)}=p_2$ and $q_k^{(k)}=p_k$ for all $3\leq k\leq s-1$. 

Notice that for all $3\leq k<l\leq s-1$ we have that $(\mu_{1,2}^{(k)},\mu_{1,s}^{(k)},\mu_{2,s}^{(k)})$ and $(\mu_{1,2}^{(l)},\mu_{1,s}^{(l)},\mu_{2,s}^{(l)})$ are nontrivial solutions of the vector equation 
$$x_{1,2}v_{1,2}-x_{1,s}v_{1,s}+x_{2,s}v_{2,s}=0.$$
Since $\mu_{1,2}^{(k)}=\mu_{1,2}^{(l)}=\lambda_{1,2}$, and $rank[v_{1,2},v_{1,s},v_{2,s}]=2$  we have that 
$\mu_{1,s}^{(k)}=\mu_{1,s}^{(l)}$ and $\mu_{2,s}^{(k)}=\mu_{2,s}^{(l)}$ for all $3\leq k<l\leq s-1$. So we can denote these constants by $\lambda_{1,s}$ and  $\lambda_{2,s}$ respectively. We have 
$$\lambda_{1,s}v_{1,s}=q_{s}^{(k)}-q_{1}^{(k)}=q_{s}^{(l)}-q_{1}^{(l)}$$
for all $3\leq k<l\leq s-1$. Since $q_{1}^{(k)}=q_{1}^{(l)}=p_1$, we get that $q_{s}^{(k)}=q_{s}^{(l)}$ for all $3\leq k<l\leq s-1$, so we can denote this vector by $p_{s}$ and we get 
\begin{eqnarray}
\lambda_{1,s}v_{1,s}=p_{s}-p_1,
\end{eqnarray}
\begin{eqnarray}
\lambda_{2,s}v_{2,s}=p_{s}-p_2.
\end{eqnarray}

Finally for all $3\leq k\leq s-1$ we define 
$$\lambda_{k,s}=\mu_{k,s}^{(k)}.$$ 
We know that $$\mu_{1,k}^{(k)}v_{1,k}-\mu_{1,s}^{(k)}v_{1,s}+\mu_{k,s}^{(k)}v_{k,s}=0,$$ and so since $\lambda_{1,k}=\mu_{1,k}^{(k)}$,  and $\lambda_{1,s}=\mu_{1,s}^{(k)}$ we get
$$(p_k-p_1)-(p_{s}-p_1)+\lambda_{k,s}v_{k,s}=0$$ or in other words 
\begin{eqnarray}
\lambda_{k,s}v_{k,s}=p_{s}-p_k.
\end{eqnarray}
And so, by induction we proved our statement.  
\end{proof}

\begin{remark} Condition $a_{i,j,k}>0$, $b_{i,j,k}>0$ and $c_{i,j,k}>0$ is not necessary for the proof of this lemma. One can replace it with  $a_{i,j,k}\neq 0$, $b_{i,j,k}\neq 0$ and $c_{i,j,k}\neq 0$, and get a result where $\lambda_{i,j}\neq 0$. However, in the next theorem we need the result with $\lambda_{i,j}>0$.

\end{remark}

We have the following converse to Theorem \ref{th2}. 
\begin{theorem}
Let  $v_{i,j}=\begin{bmatrix}
\alpha_{i,j}^1\\
\alpha_{i,j}^2\\
\vdots\\
\alpha_{i,j}^d
\end{bmatrix}\in \mathbb{R}^d$ for all $1\leq i<j\leq s$.  Assume that 
\begin{enumerate}
\item For all $1\leq i<j\leq s$, and all $1\leq a\leq d$ we have $v_{i,j}^a\in [0,1]$, and $\sum_{a=1}^{d}v_{i,j}^a=1$. 
\item For every $1\leq i<j<k\leq s$, there exist $a_{i,j,k}> 0$, $b_{i,j,k}> 0$, $c_{i,j,k}> 0$ such that 
\begin{eqnarray}
a_{i,j,k}v_{i,j}-b_{i,j,k}v_{i,k}+c_{i,j,k}v_{j,k}=0.
\end{eqnarray}
\item For every $1\leq i<j<k<l\leq s$, and all $1\leq a\leq d$ there exists $b$ (that depends on $a$, $i$, $j$, $k$ and $l$) such that 
$$rank\begin{bmatrix}
\alpha_{i_1,i_2}^a&\alpha_{i_1,i_3}^a&\alpha_{i_2,i_3}^a\\
\alpha_{i_1,i_2}^b&\alpha_{i_1,i_3}^b&\alpha_{i_2,i_3}^b
\end{bmatrix}=2.$$
for all $i_1<i_2<i_3\in \{i,j,k,l\}$.  
\item The $S^2$-rank of $(v_{i,j})_{1\leq i<j\leq s}$ is $1$.
\end{enumerate}
Then there exist two random variables $X:(0,1]\to \{1,2,\dots,s\}$, and $Y:(0,1]\to \{1,2,...,d\}$ such that 
$$v_{i,j}^a=P(Y=a\vert i<X\leq j),$$ for all $1\leq i<j\leq s$, and $1\leq a\leq d$. 
\label{th3}
\end{theorem}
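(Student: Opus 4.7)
The plan is to apply Lemma \ref{lemma1} to extract vectors $p_i \in \mathbb{R}^d$ and positive weights $\lambda_{i,j}$ satisfying $\lambda_{i,j} v_{i,j} = p_j - p_i$, and then to build the sample space, $X$, and $Y$ directly from these data on $(0,1]$ with Lebesgue measure.

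First, hypotheses (2), (3), (4) of the theorem are exactly hypotheses (1), (2), (3) of Lemma \ref{lemma1}, so invoking that lemma yields $p_i \in \mathbb{R}^d$ for $1 \leq i \leq s$ and $\lambda_{i,j} > 0$ for $1 \leq i < j \leq s$ with $\lambda_{i,j} v_{i,j} = p_j - p_i$. Set $\sigma_i := \sum_{a=1}^d p_i^a$. Summing the components of this identity and using $\sum_a v_{i,j}^a = 1$ from hypothesis (1) gives $\lambda_{i,j} = \sigma_j - \sigma_i$, and since $\lambda_{i,j} > 0$ we obtain $\sigma_1 < \sigma_2 < \cdots < \sigma_s$. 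Replacing each $p_i$ by $(p_i - p_1)/(\sigma_s - \sigma_1)$ and each $\lambda_{i,j}$ by $\lambda_{i,j}/(\sigma_s - \sigma_1)$ preserves the relation and normalizes to $\sigma_1 = 0$, $\sigma_s = 1$, so in particular $\lambda_{i,j} \in (0,1]$.

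Next, on $D = (0,1]$ equipped with Lebesgue measure, define $X(\delta) = j$ whenever $\sigma_{j-1} < \delta \leq \sigma_j$. This forces $X(\delta) > 1$ for all $\delta$, and $P(i < X \leq j) = \sigma_j - \sigma_i = \lambda_{i,j} > 0$. For $Y$, partition each interval $(\sigma_{j-1}, \sigma_j]$ into $d$ consecutive sub-intervals of lengths $p_j^a - p_{j-1}^a$ for $a = 1, \ldots, d$; these lengths are non-negative since $\lambda_{j-1,j} v_{j-1,j}^a = p_j^a - p_{j-1}^a$ with $\lambda_{j-1,j} > 0$ and $v_{j-1,j}^a \geq 0$, and they sum to $\sigma_j - \sigma_{j-1}$. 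Set $Y = a$ on the $a$-th such sub-interval. Telescoping then gives
\[
P(Y = a,\; i < X \leq j) = \sum_{k=i+1}^{j}(p_k^a - p_{k-1}^a) = p_j^a - p_i^a = \lambda_{i,j}\, v_{i,j}^a,
\]
so dividing by $\lambda_{i,j}$ produces the required conditional probability $v_{i,j}^a$.

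The substantive difficulty is entirely packaged inside Lemma \ref{lemma1}; once the $p_i$ and $\lambda_{i,j}$ are in hand, the rest is bookkeeping. The only point worth emphasizing is that the normalization $\sigma_1 = 0$, $\sigma_s = 1$ (which uses the freedom to translate the $p_i$ by a common vector and to rescale them by a common positive constant) is what allows the construction to live on the genuine probability space $(0,1]$ rather than on an arbitrary interval, and this is precisely why we need $\lambda_{i,j} > 0$ rather than merely $\lambda_{i,j} \neq 0$ in Lemma \ref{lemma1}.
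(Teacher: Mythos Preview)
Your proof is correct and follows essentially the same route as the paper's own argument: invoke Lemma~\ref{lemma1}, sum coordinates to get $\lambda_{i,j}=\sigma_j-\sigma_i$, normalize so the $\sigma_i$ land in $[0,1]$ with $\sigma_1=0$ and $\sigma_s=1$, then partition $(0,1]$ to define $X$ and $Y$. Your $\sigma_i$ is exactly the paper's $\lambda_{1,i}$ after its normalization $p_1=0$, $\lambda_{1,s}=1$, so the two constructions coincide.
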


\begin{proof}

From Lemma \ref{lemma1} we know that there exist $\lambda_{i,j}>0$ for all $1\leq i<j\leq s$, and $p_i\in \mathbb{R}^d$ for all $1\leq i\leq s$, such that $\lambda_{i,j}v_{i,j}=p_j-p_i$ for all $1\leq i<j\leq s$.  We can normalize the $(\lambda_{i,j})_{1\leq i<j\leq s}$ such that $\lambda_{1,s}=1$. Changing $p_i$ to $p_i-p_1$ we may assume that $p_1=0\in \mathbb{R}^d$. 

For $1\leq i<j<k\leq s$ we have $\lambda_{i,j}v_{i,j}-\lambda_{i,k}v_{i,k}+\lambda_{j,k}v_{j,k}=0$. Summing all the entries in these vectors we get 
$$\lambda_{i,j}(\sum_{a=1}^dv_{i,j}^a)-\lambda_{i,k}(\sum_{a=1}^dv_{i,k}^a)+\lambda_{j,k}(\sum_{a=1}^dv_{j,k}^a)=0.$$ Since $\sum_{a=1}^dv_{i,j}^a=1$ for all $1\leq i<j\leq s$ this means that 
$$\lambda_{i,j}-\lambda_{i,k}+\lambda_{j,k}=0,$$ in particular 
$\lambda_{i,j}=\lambda_{1,j}-\lambda_{1,i}$. Notice that if $1\leq i<j\leq s$ then $\lambda_{1,i}<\lambda_{1,j}$. 

Define $X:(0,1]\to \{1,2,\dots, s\}$, and $Y:(0,1]\to \{1,2,\dots, d\}$ determined by 
$$X(t)=k~ {\rm if} ~t\in (\lambda_{1,k-1},\lambda_{1,k}],$$ 
(here we use the convention $\lambda_{1,1}=0$), and 
$$Y(t)=h ~ {\rm if} ~t\in (\lambda_{1,i-1}+\sum_{a=1}^{h-1}(p_i^a-p_{i-1}^a),\lambda_{1,i-1}+\sum_{a=1}^{h}(p_i^a-p_{i-1}^a)]$$ for some  $1\leq i\leq s$. 

Let $P$ be the probability given by the standard measure on the interval $(0,1]$. From the above definitions we have that for every $1\leq i<j\leq s$ 
\begin{eqnarray*}
P(i<X\leq j)&=&\lambda_{1,j}-\lambda_{1,i}\\&=&\lambda_{i,j}.
\end{eqnarray*}
For every $h\in \{1,2,\dots,d\}$  we have
\begin{eqnarray*}
P(Y=h)&=&\sum_{1\leq a\leq s}(p^h_a-p_{a-1}^h)\\
&=&p_{s}^h-p_{1}^h\\
&=&p_{s}^h\\
&=&\lambda_{1,s}v_{1,s}^h\\ 
&=&v_{1,s}^h.\\ 
\end{eqnarray*}
More generally, for every $1\leq i<j\leq s$, and every $h\in \{1,2,\dots,d\}$ we have
\begin{eqnarray*}
P(Y=h,~ i<X\leq j)&=&\sum_{i< a\leq j}(p^h_a-p_{a-1}^h)\\
&=&p_j^h-p_i^h\\
&=&\lambda_{i,j}v_{i,j}^h.
\end{eqnarray*}
In particular we get  $$P(Y=h~\vert~ i<X\leq j)=v_{i,j}^h,$$
which proves our statement.

\end{proof}

\begin{remark} Let $v_{1,2}=\begin{bmatrix}
0.5\\
0.5
\end{bmatrix}$, $v_{2,3}=\begin{bmatrix}
0.8\\
0.2
\end{bmatrix}$, $v_{3,4}=\begin{bmatrix}
0.2\\
0.8
\end{bmatrix}$, $v_{1,3}=\begin{bmatrix}
0.7\\
0.3
\end{bmatrix}$, $v_{2,4}=\begin{bmatrix}
0.7\\
0.3
\end{bmatrix}$ and $v_{1,4}=\begin{bmatrix}
0.6\\
0.4
\end{bmatrix}$. One can see that if we take $\alpha_{1,2,3}=\frac{1}{3}$, $\alpha_{1,2,4}=\frac{1}{2}$, $\alpha_{1,3,4}=\frac{4}{5}$, and $\alpha_{2,3,4}=\frac{5}{6}$ then we have 
$$v_{i,k}=\alpha_{i,j,k}v_{i,j}+(1-\alpha_{i,j,k})v_{j,k},$$
for all $1\leq i<j<k\leq 4$. As it was noticed in Example \ref{example1},  $det^{S^2}((v_{i,j})_{1\leq i<j\leq 4})=-0.007\neq 0$, and so the $S^2$-rank of $(v_{i,j})_{1\leq i<j\leq 4}$ is not equal to $1$. This means that condition  four in the statement of Theorem \ref{th3} is not a consequence of the other three. 

Condition $rank[v_{i,j},v_{i,k},v_{j,k}]=2$ is not necessary. For example, if the two variables $X$ and $Y$ are independent then the conditional probability matrix will have rank equal to $1$. However, we were not able to prove a  statement without it. \label{remark2}
\end{remark}

\begin{example} Let's assume that in Example \ref{example1} we have a third student that considered the set $E$ of all undergraduate students in that class, and a  random variable $Z:E\to \{1,2,3,4\}$  defined by $Z=2$ if the student in freshman or sophomore, $Z=3$ if the student is junior, and $Z=4$ if the student is senior. He  submitted the  table from Figure \ref{studentC}. 
\begin{figure}[htbp]
 \begin{tabular}{|c|c|c|c|c|c|c|c|c|}
  \hline
$(i,j)$&$(1,2)$&$(2,3)$&$(3,4)$&$(1,3)$&$(2,4)$&$(1,4)$\\
  \hline
 $P_C(Y=1|i<Z\leq j)$ &0.5&1&0.6&0.8&0.75&0.7\\
    \hline
    \end{tabular}
		\caption{Table from Student C \label{studentC}}
\end{figure}

The corresponding matrix is
$\mathfrak{V}_C=\begin{bmatrix}
0.5 & 1& 0.6& 0.8 & 0.75&0.7\\
0.5 & 0& 0.4 &0.2 & 0.25&0.3\\
\end{bmatrix}.$
Just like in Example \ref{example1} one can find  the general  solution
\begin{equation}\begin{bmatrix}
\mu_{1,2}\\
\mu_{2,3}\\
\mu_{3,4}\\
\mu_{1,3}\\
\mu_{2,4}\\
\mu_{1,4}
\end{bmatrix}=\mu
\begin{bmatrix}
2\\
3\\
5\\
5\\
8\\
10
\end{bmatrix}.\label{eqlam2}
\end{equation}

If we denote by $w_{i,j}$ the columns of matrix $\mathfrak{V}_C$, then the smallest  value of $\mu$ for which $\mu_{i,j}w_{i,j}\in \mathbb{Z}_+^2$ for all $1\leq i<j\leq 4$ is $\mu=1$, so a minimum class size is $10$.  In that case, the distribution is described in the table from  Figure \ref{studentB2}  
\begin{figure}[htbp]
 \begin{tabular}{|c|c|c|c|c|c|c|c|c|}
  \hline
    &  Fr. or Soph.   &Jr. & Snr. & Fr. or Soph. & Jr, Snr  &UG\\
  \hline
Watched the show &1&3&3&4&6&7\\
    \hline
Did not watch the show &1&0&2&1&2&3\\
    \hline
    \end{tabular}
		\caption{Distribution table for $\mathfrak{V}_C$}
\end{figure}

Next, let's notice that Table \ref{studentB} and Table \ref{studentC} are compatible. Indeed, one can see that $P(Y=1|1<X\leq 2)=P(Y=1|1<Z\leq 2)$, $P(Y=1|2<X\leq 3)=P(Y=1|2<Z\leq 4)$ and $P(Y=1|1<X\leq 3)=P(Y=1|1<Z\leq 4)$. This allows us to define a new random variable $T:D\to \{1,2,3,4,5\}$ determined by $T=2$ if the student in freshman or sophomore, $T=3$ if the student is junior, $T=4$ if the student is senior, and $T=5$ if the student is a graduate student. Using this new variable $T$, the combined information is presented in Table \ref{studentComb}. 

\begin{figure}[htbp]
 \begin{tabular}{|c|c|c|c|c|c|c|c|c|c|c|c|c|}
  \hline
$(i,j)$&$(1,2)$&$(2,3)$&$(3,4)$&$(4,5)$&$(1,3)$&$(2,4)$&$(3,5)$&$(1,4)$&$(2,5)$&$(1,5)$\\
  \hline
 $P(Y=1|i<T\leq j)$ &0.5&1&0.6&0.25 &0.8&0.75&?&0.7&0.65&0.625\\
    \hline
    \end{tabular}
		\caption{Combined information \label{studentComb}}
\end{figure}

Take $t_{i,j}=\begin{bmatrix}
P(Y=1|i<T\leq j) \\
P(Y=2|i<T\leq j) 
\end{bmatrix}$ for all $1\leq i<j\leq 5$. 
Notice that there is missing information in the table from Figure \ref{studentComb}, we do not know the value of $P(Y=1|3<T\leq 5)$.  However we can use the other information to determine it. If the data from these two tables is compatible we can take $\mu=2$ in Equation \ref{eqlam2}, and $\lambda=4$  in Equation \ref{eqlam} to get $\mu_{1,3}=\nu_{1,3}=10$ and $\lambda_{1,4}=\nu_{1,5}=24$. The equation $\nu_{1,3}t_{1,3}-\nu_{1,5}t_{1,5}+\nu_{3,5}t_{3,5}=0$ becomes 
$$10\begin{bmatrix}
0.8 \\
0.2 
\end{bmatrix}-24\begin{bmatrix}
0.625 \\
0.375 
\end{bmatrix}+\nu_{3,5}t_{3,5}=0,$$ which implies that $\nu_{3,5}t_{3,5}=\begin{bmatrix}
7 \\
7 
\end{bmatrix}$ and so $$t_{3,5}=\begin{bmatrix}
0.5 \\
0.5 
\end{bmatrix}~~~{\rm and }~~~\nu_{3,5}=14.$$
The corresponding distribution is presented in  Table \ref{dist35}, and the minimum class size is still $24$.

\begin{figure}[htbp]
 \begin{tabular}{|c|c|c|c|c|c|c|c|c|c|c|c|c|}
  \hline
       &  Fr.    &Jr. & Snr. &  Gr  & Fr.  & Jr.   & Sr. & UG & Jr. & All \\
       &  Soph.  &    &      &      & Soph &  Snr. &Gr.  &    & Snr.&    \\
       &         &    &      &      & Jr.  &       &     &    & Gr. &    \\
  \hline
Watched the show       &2&6&6&1&8&12& 7& 14 & 13&15\\
    \hline
Did not watch the show &2&0&4&3&2&4 & 7& 6  & 7& 9\\
    \hline
    \end{tabular}
		\caption{Combined distribution table \label{dist35}}
\end{figure}

\end{example}

\begin{example} Building again on Example \ref{example1}, let's assume that after initially submitting an incorrect answer, student A  tries to get extra credit and submits the new data from Table \ref{studentA2}. Here,  $U:D\to \{1,2,3\}$ is determined by $U=1$ if the student enjoyed the show, $U=2$ if the student did not enjoy the show, and $U=3$ if the student did not watched the show. 

\begin{figure}[htbp]
 \begin{tabular}{|c|c|c|c|c|c|c|c|c|}
  \hline
$(i,j)$&$(1,2)$&$(2,3)$&$(3,4)$&$(1,3)$&$(2,4)$&$(1,4)$\\
  \hline
 $P_A(U=1|i<X\leq j)$  &0.375&0.4375 &0.125& ?  &?&?\\
  \hline
 $P_A(U=2|i<X\leq j)$  &0.125&0.3125 &0.125&?   &?&? \\
  \hline
	$P_A(U=3|i<X\leq j)$ &0.5  &0.25   &0.75 &0.3      &0.35 &0.375\\
    \hline
    \end{tabular}
		\caption{New table from Student A \label{studentA2}}
\end{figure}
Notice that the data from Table \ref{studentB} and Table \ref{studentA2} are compatible, since $P_B(Y=2|i<X\leq j)=P_A(U=3|
i<X\leq j)$ for all $1\leq i<j\leq 4$. Again, there is some missing information in Table \ref{studentA2}, however that can be recovered by noticing that Equation \ref{eqlam} is the general solution of the problem associated to the data in Table \ref{studentA2}. In particular one can check that the smallest  solution for which $\lambda_{i,j}v_{i,j}\in \mathbb{Z}_+^2$ for all $1\leq i<j\leq 4$ is when $\lambda=8$, so the minimum class size is $48$.  In that case, the distribution is described in the table from  Figure \ref{studentA3}. 
\begin{figure}[htbp]
 \begin{tabular}{|c|c|c|c|c|c|c|c|c|}
  \hline
  &  Fr. or Soph.   &Jr. or Snr.& Grad. & UG & Jr, Snr. or Gr. &All\\
  \hline
Enjoyed the show       &3&14&1&17&15&18\\
    \hline
Did not enjoy the show &1&10&1&11&11&12\\
    \hline
Did not watch the show &4&8&6&12&14&18\\
    \hline
    \end{tabular}
		\caption{Distribution table for $\mathfrak{V}_{A_{NEW}}$\label{studentA3}}
\end{figure}

\end{example} 

\begin{remark} It would be interesting to see if these results can be applied to statistics. One serious problem is that the rank, and the $S^2$-rank are highly sensitive to small variations of the vectors $v_{i,j}$. So, in order to use the ideas from this paper to explicit problems, one needs to decide what range values of  $det$, and $det^{S^2}$ are small enough to be considered $0$ in this context, and how to correct the errors (fit data) in that situation.  Since we don't have any expertise in that area, we leave this problem to statisticians to decide. 
\end{remark}



\section*{Acknowledgment}
We thank J. Chen and S. Lippold  for feedback on an earlier version of this paper. 

\bibliographystyle{amsalpha}

\end{document}